

\documentclass[final]{siamltex}
\usepackage{array}
\usepackage{amssymb}
\usepackage{bm}
\usepackage{mathtools}
\usepackage{amsmath}
\usepackage{comment}
\usepackage{amsfonts}
\usepackage{epstopdf}
\usepackage{algorithm}
\usepackage{textcomp}
\usepackage{hyperref}
\usepackage{tikz}
\usepackage{graphicx,color} %
\usepackage{subfigure}

\graphicspath{{img/all/}}

\ifpdf
  \DeclareGraphicsExtensions{.eps,.pdf,.png,.jpg}
\else
  \DeclareGraphicsExtensions{.eps}
\fi

\newcommand{\TODO}[1][]{{\color{red}TODO\ifthenelse{\equal{#1}{}}{}{: #1}}}




  \usepackage{todonotes}

\newcommand{\removed}[1] {\ifmmode{\color{red}\cancel{#1}}\else{\color{red}\sout{#1}}\fi}

\newcommand{\scurl}[1][]{\textup{curl}\ifthenelse{\equal{#1}{}}{}{_#1}}
\newcommand{\vcurl}[1][]{\textup{\textbf{curl}}\ifthenelse{\equal{#1}{}}{}{_#1}}

\newcommand{\sdiv}[1][]{\textup{div}\ifthenelse{\equal{#1}{}}{}{_#1}}
\newcommand{\vdiv}[1][]{\textup{\textbf{div}}\ifthenelse{\equal{#1}{}}{}{_#1}}

\newcommand{\RR}{\mathbb{R}}

\def\cT{\mathcal{T}}
\newcommand{\norm}[1]{\| #1 \|}

\begin{document}

\title{A Stabilized Trace FEM for Surface Cahn--Hilliard Equations: Analysis and Simulations}

\author{Deepika Garg\thanks{Department of Mathematics, University of Houston, 651 PGH, Houston, 77204, TX, USA, 
 (dgarg5@uh.edu, maolshanskiy@uh.edu ). } 
\and Maxim Olshanskii\footnotemark[1]
}



\maketitle
\begin{abstract}
	This paper addresses the analysis and numerical assessment of a computational method for solving the Cahn--Hilliard equation defined on a surface. The proposed approach combines the stabilized trace finite element method for spatial discretization with an implicit–explicit scheme for temporal discretization. The method belongs to a class of unfitted finite element methods that use a fixed background mesh and a level-set function for implicit surface representation. We establish the numerical stability of the discrete problem by showing a suitable energy dissipation law for it. We further derive optimal-order error estimates assuming simplicial background meshes and finite element spaces of order $m \geq 1$. The effectiveness of the method is demonstrated through numerical experiments on several two-dimensional closed surfaces, confirming the theoretical results and illustrating the robustness and convergence properties of the scheme.
\end{abstract}

\begin{keywords}
	Surface Cahn--Hilliard equation,  Trace finite elements, Stability,  error estimates, lateral phase separation
\end{keywords}

\begin{AMS}
	65M60, 65M15, 35K30
\end{AMS}

\section{Introduction}

The Cahn--Hilliard equation \cite{cahn1958free} models phase separation in binary mixtures, where the solution variable represents concentration and a double-well potential characterizes the distinct phases. While originally developed for metallurgical applications such as spinodal decomposition in alloys \cite{cahn1961spinodal, miller1995spinodal}, the equation and its variants have found widespread use in diverse fields, including modeling of tumor growth  \cite{garcke2022viscoelastic, garcke2016cahn} and multicomponent plasma membranes \cite{membr1,zhiliakov2021experimental}.

Traditional numerical studies of the Cahn--Hilliard equation have predominantly focused on stationary Euclidean domains; see, e.g., \cite{elliott1989second, shen2010numerical}, among many other publications. More recently, attention has been directed to surface-bound formulations motivated by applications to fluid membrane thermodynamics. The surface Cahn--Hilliard equation poses particular numerical challenges due to the presence of fourth-order terms, nonlinearity, and the difficulties of discretizing tangential operators on complex geometries. While finite difference methods have been applied to specific surface configurations \cite{gera2017cahn,greer2006fourth, jeong2015microphase}, several finite element methods have been developed for surface Cahn--Hilliard equations, including fitted approaches on spheres and saddle surfaces \cite{du2011finite}, with extensions to more general surfaces \cite{garcke2016coupled, li2017unconditionally}. Further developments include coupled systems such as surface Cahn--Hilliard--Navier--Stokes equations \cite{nitschke2012finite}, bulk–surface coupled models \cite{barrett2017finite}, and evolving surface formulations \cite{elliott2025fully}. A common limitation of these methods is their reliance on surface-fitted meshes that conform exactly to the geometry.

In this paper, we consider a geometrically unfitted finite element method for the surface Cahn--Hilliard equation that has been introduced and applied to simulate phase separation in multicomponent lipid bilayers in \cite{membr1,wang2023fusogenicity,yushutin2019computational,zhiliakov2021experimental}. These earlier studies lacked theoretical analysis, and the present work aims to fill this gap by proving stability and deriving optimal-order error estimates for the method.

Our approach builds on the Trace Finite Element Method (TraceFEM) \cite{olshanskii2018trace}, which employs a sharp surface representation without requiring surface-fitted meshes or explicit parametrization. Unlike fitted methods that necessitate mesh generation conforming to the surface geometry, the unfitted approach uses a fixed background mesh that is independent of the surface position. The surface $\Gamma$ is represented implicitly as the zero level set of a smooth function $\phi$ defined on the background domain, allowing for greater flexibility in handling complex geometries.

The key features of the method include: (i) stabilization terms that ensure well-conditioned linear systems while controlling the extension of solutions from the surface to the volumetric neighborhood; (ii) energy-stable time discretization that preserves the gradient flow structure of the continuous problem; and (iii) optimal-order error estimates for arbitrary-order finite element spaces.

Numerical stability of the method follows from  a discrete energy dissipation law established here. We also derived an  optimal-order error estimate for simplicial background meshes using finite element spaces of order $m \geq 1$. The error estimate includes a quantification of the geometrical error. The effectiveness of the method is demonstrated through numerical experiments on various surfaces, confirming the theoretical convergence rates and demonstrating its capability for simulating phase separation dynamics on complex geometries.

The outline of this paper is as follows. In Section \ref{sec:model}, we introduce the model problem and its weak formulation. Section \ref{sec:preliminaries} presents the preliminaries of the trace finite element method. In Section \ref{sec:stability}, we establish a stability result. Section \ref{sec:error_analysis} provides error estimates for the fully discrete scheme in both space and time. Finally, Section \ref{sec:numerics} presents results of a few numerical experiments. We first verify the theoretical convergence rates using piecewise linear and quadratic finite elements, and then demonstrate the effectiveness of the method through phase separation examples on a sphere and on a more complex surface.

\section{Problem Definition}\label{sec:model}

Let $\Gamma$ be a closed, sufficiently smooth surface in $\mathbb{R}^3$ with outward unit normal $\mathbf{n}$.  
We consider the standard Sobolev space \( H^{r}(\Gamma) \) of order \( r \geq 0 \) for functions defined on \(\Gamma\), equipped with the norm \(\| \cdot \|_{H^{r}(\Gamma)}\). 
We consider the Cahn--Hilliard equation \cite{cahn1961spinodal, cahn1958free} posed on $\Gamma$, which models phase separation without phase transition in binary systems within thin material layers. 

The Cahn--Hilliard system for concentration $c$ and chemical potential $\mu$ reads:
\begin{align}
	\frac{\partial c}{\partial t} - \mbox{div}_{\Gamma}(M \nabla_{\Gamma} \mu) = 0& \quad \text{on } \Gamma \times (0, T], \label{eq_1} \\
	\mu = f'_0(c) - \epsilon^2 \Delta_{\Gamma}c& \quad \text{on } \Gamma\times (0, T], \label{eq_3}
\end{align}
where $M$ is the mobility coefficient, $\mu$ is the chemical potential, and $\epsilon > 0$ characterizes the thickness of a  transition layer between two phases. The system is supplemented with the initial condition $c(x,0) = c_0(x)$ on $\Gamma$.

We will assume the constant mobility $M = 1$ and consider the Landau-Lifshitz  double-well potential: 
\begin{align*}
	f_0(c) = \frac{1}{4}(c^2-1)^2, 
\end{align*}
which has equal minima at $c=\pm1$, so that $c=-1$ corresponds to phase 1 and  $c=1$ corresponds to phase 2.

The total free energy functional is given by:
\begin{align}
	E(c) = \int_{\Gamma} \left( \frac{1}{2} \epsilon^2 |\nabla_{\Gamma}c|^2 + f_0(c) \right) ds, \label{eq_6}
\end{align}
and satisfies the energy dissipation property 

\begin{align}\label{eq_7}
	\frac{d}{dt}E(c) < 0.   
\end{align}

A weak formulation of (\ref{eq_1})--(\ref{eq_3}) reads:\\
Find $ (c, \mu) \in \left(H^1(0,T;H^{-1}(\Gamma)) \cap L^2(0,T;H^1(\Gamma))\right) \times L^2(0,T;H^1(\Gamma))$ such that 
\begin{align} \label{eq_8}
	\int_{\Gamma}\frac{\partial c}{\partial t} v \,ds &= - \int_{\Gamma} M \nabla_{\Gamma} \mu \cdot \nabla_{\Gamma}v \,ds, \nonumber \\
	\int_{\Gamma}\mu q  \,ds &= \int_{\Gamma} f'_0(c) q\,ds + \epsilon^2\int_{\Gamma}  \nabla_{\Gamma} c \cdot \nabla_{\Gamma}q \,ds.
\end{align}
for all $(v, q) \in H^1(\Gamma) \times H^1(\Gamma)$ and a.e. $t\in(0,T)$. 


\section{Preliminaries}\label{sec:preliminaries}

For the space discretization of the surface Cahn--Hilliard problem described in the previous section, we apply the TraceFEM \cite{olshanskii2018trace, yushutin2019computational}. 

As typical for geometrically unfitted or immersed interface/boundary methods, the TraceFEM relies on triangulation of a bulk computational domain $\Omega$ (such that $\Gamma \subset \Omega$ holds) into shape regular tetrahedra that are blind to the position of $\Gamma$. The surface ifself is defined implicitly as the zero level set of a sufficiently smooth (at least Lipschitz continuous) function $\phi$, i.e., 
\[
\Gamma = \{x \in \Omega : \phi(x) = 0\},
\]
such that $|\nabla \phi| \ge c_0 > 0$ in $\mathcal{O}(\Gamma)$, a 3D neighborhood  of the surface.

Let $\{\mathcal{T}_h\}$ be a collection of all tetrahedra such that $\overline{\Omega}= \cup_{T \in \mathcal{T}_h} \overline{T}$. 
For the purpose of analysis we assume that the mesh $\mathcal{T}_h$ is  quasi-uniform with the characteristic mesh size $h$.
Consider a standard finite element space of continuous functions that are piecewise polynomials of degree $ \le m$. This bulk (volumetric) finite element space is denoted by $V_h^{\rm bulk}$:
\[
V_h^{\rm bulk} := \left\{w \in {C}(\Omega) : w \in \mathcal{P}_m(T) ~~ \forall ~ T \in \cT_{h}   \right\}.
\]

To define geometric quantities and for the purpose of numerical integration, we approximate $\Gamma$ with a discrete surface $\Gamma_h$, which is defined as the zero level set of a Lagrangian interpolant $\phi_h\in V_h^{\rm bulk}$ with $m=q$ for the level set function $\phi$ on the given mesh. The positive integer $q$ defines the order of the geometry approximation. The problem of efficient numerical integration over $\Gamma_h$ is addressed in \cite{GLR:2018}. 

The discrete normal vector and tangential projector are given by
\[
\mathbf{n}_h = \frac{\nabla \phi_h}{|\nabla \phi_h|},
\qquad
\mathbf{P}_h = I - \mathbf{n}_h \mathbf{n}_h^{\!T},
\]
and the surface gradient on~$\Gamma_h$ reads $\nabla_{\Gamma_h}v = \mathbf{P}_h\nabla v$ for $v\in V_h^{\rm bulk}$.
The geometry approximation satisfies
\begin{equation*}
	\|\mathbf{n}^e - \mathbf{n}_h\|_{L^\infty(\Gamma_h)} \le C h^q,
	\quad
	\mathrm{dist}(\Gamma_h, \Gamma) \le C h^{q+1},
\end{equation*}
where $\mathbf{n}^e$ is the closest point extension of $\mathbf{n}$ off $\Gamma$ as defined below.
Here and further the scalar product in \( L^{2}(\omega) \) is  denoted by \((\cdot,\cdot)_{\omega}\), where $\omega$ can be either a domain in $\RR^d$ or a surface of co-dimension one.  The corresponding norm is written as \(\| \cdot \|_{\omega}\).  
We will also use $C$ to denote a generic constant independent of discretization parameters. 

The subset of tetrahedra that have a nonzero intersection with $\Gamma_h$ is denoted by $\mathcal{T}^{\Gamma}_h$.  The domain formed by all tetrahedra in $\mathcal{T}^{\Gamma}_h$ is denoted by $\Omega^{\Gamma}_h$.

On $\mathcal{T}^{\Gamma}_h$ we use restrictions of functions from $ V_h^{\rm bulk}$. This narrow-band (volumetric) finite element space is denoted by $V_h$:
\[
V_h := \left\{w \in {C}(\Omega^{\Gamma}_h) : w \in \mathcal{P}_m(T) ~~ \forall ~ T \in \cT^{\Gamma}_{h}   \right\}.
\]
It will serve as a space for trial and test functions in the finite element method.

{For the analysis of finite element methods on surfaces, it is crucial to relate functions and norms between the exact surface $\Gamma$ and its discrete approximation $\Gamma_h$. This is facilitated by extension and lift operations. For a function $v : \Gamma \to \mathbb{R}$, its {normal extension} $v^e$ to a tubular neighborhood $\mathcal{O}(\Gamma)$ is defined by $v^e(\mathbf{x}) = v(\mathbf{p}(\mathbf{x}))$, where $\mathbf{p}(\mathbf{x})$ is the closest point projection onto $\Gamma$. Conversely, for a function $v_h : \Gamma_h \to \mathbb{R}$, its {lift} $v_h^\ell$ onto $\Gamma$ is defined by $v_h^\ell(\mathbf{p}(\mathbf{x})) = v_h(\mathbf{x})$ for all $\mathbf{x} \in \Gamma_h$.
	We have the following norm equivalence results:}
\begin{itemize}
	\item \textit{Uniform norm bound for extended functions}:  For a function $v \in W^{k,\infty}(\Gamma)$, its normal extension is stable in the sense that
	\begin{equation*}
		\|v^e\|_{W^{k,\infty}(\mathcal{O}(\Gamma))} \le C \|v\|_{W^{k,\infty}(\Gamma)}. \label{eq_30}
	\end{equation*}
	Here $\mathcal{O}(\Gamma)$ is a tubular neighborhood of th surface  where the closest point projection is well-defined.
	\item \textit{Volume-to-surface bound in narrow bands}: { For $\epsilon > 0$ sufficiently small, let $U_\epsilon(\Gamma) = \{ \mathbf{x} \in \mathbb{R}^3 : |\phi(\mathbf{x})| < \epsilon \} \subset \mathcal{O}(\Gamma)$ be a narrow band. Then for the normal extension $v^e$,}
	\begin{equation*}
		\|v^e\|_{H^k(U_\epsilon(\Gamma))} \le C \epsilon^{1/2} \|v\|_{H^k(\Gamma)}. \label{eq_31}
	\end{equation*}
	The factor $\epsilon^{1/2}$ accounts for the integration over the normal direction.
    \item \textit{Geometric identities.}
For $x\in\Gamma_h$, let $\mathbf{p}(x)\in\Gamma$ be its closest point. 
The change of variables formula for surface measures reads
\begin{align}
\mu_h(x)\,ds_h(x) = ds(\mathbf{p}(x)), \label{eq_geo_eqv}
\end{align}
where $\mu_h$ is the Jacobian factor of the closest-point map and $d s_h(x)$ and $d s(p(x))$ are the surface measures on $\Gamma_h$ and $\Gamma$ respectively.
This identity implies the geometry approximation
\begin{equation}
\|1-\mu_h\|_{L^\infty(\Gamma_h)} \le C\,h^{q+1},
\qquad
\|v^e\|_{\Gamma_h} \le C\,\|v\|_{\Gamma}, 
\label{eq_geo_eqv_1}
\end{equation}
for all sufficiently small~$h$.
These estimates are used repeatedly in the error analysis, see also~\cite{reusken2015analysis}.

Finally, for any smooth scalar function $f_0'$ and $v:\Gamma\to\mathbb{R}$, 
the normal extension commutes with composition:
\begin{equation}
\big(f_0'\!\circ v\big)^{e} = f_0'\!\circ (v^{e}).
\label{eq_comp_ext}
\end{equation}
\end{itemize}

Let us introduce the following bilinear forms on $H^1(\Gamma_h)\cap H^1(\Omega^{\Gamma}_h)\times H^1(\Gamma_h)\cap H^1(\Omega^{\Gamma}_h)$:
\begin{align}\label{bilinear}
	a_h(u, v) &:=  (\nabla_{\Gamma_h}u, \nabla_{\Gamma_h} v)_{\Gamma_h} + h \int_{\Omega^{\Gamma}_h} (\mathbf{n}_h \cdot \nabla u) (\mathbf{n}_h \cdot \nabla v) \,dx. 
\end{align}
Here, the volumetric term in  bilinear form is stabilization term that ensure well-posedness of the discrete problem and control the extension of solutions from the surface to the bulk domain~\cite{GLR:2018,LOX:2018}.

We define the stabilized Ritz projection $\pi_h: H^1(\Gamma)\to V_h$ as the solution to
\begin{align} 
a_h(\pi_h v, w_h) &= (\nabla_\Gamma v, \nabla_\Gamma w_h^\ell)\quad\forall\, w_h\in V_h, 
\qquad (\pi_h v - v^e, 1)_{\Gamma_h}=0.
\label{proj_opera}
\end{align}
From the error estimates for the trace finite element solution to the Laplace--Beltrami problem in~\cite{GLR:2018, reusken2015analysis}, it follows 
that  
\begin{align}
\|\pi_h v\|_{H^1(\Gamma_h)} &\le C \|v\|_{H^1(\Gamma)}, \nonumber \\
\|\pi_h v - v^e\|_{\Gamma_h} + h \|\nabla_{\Gamma_h}(\pi_h v - v^e)\|_{\Gamma_h}
&\le C (h^{r+1}+h^{q+1})\, \|v\|_{H^{r+1}(\Gamma)}, 
\label{proj_est}
\end{align}
for $v\in H^{r+1}(\Gamma), ~ r=1,\dots,m,$ and with  constants $C$ independent of how $\Gamma$ or $\Gamma_h$ cuts through the background mesh.

The geometric error term $h^{q+1}$ reflects the accuracy of the surface approximation, while the approximation error term $h^{k+1}$ results from the finite element interpolation properties. For optimal convergence, one should choose $q = m$ to balance these error contributions.
\smallskip

For time discretization, we let $\tau$ be the time step size and denote $\delta v^{n+1}=v^{n+1}-v^{n}$. At time instance $t_n = n\tau$, with time step $\tau = T/N$, $c^n$ denotes the approximation of $c(t^n, x)$; similar notation is used for other quantities of interest.
  
With these preliminary results collected, we now proceed to derive stability and error estimates for the numerical solution of the surface Cahn--Hilliard problem.

\section{Finite element method and stability estimate} \label{sec:stability}
In this section, we introduce a finite element method and establish its  stability property. 

The variational problem (\ref{eq_1})--(\ref{eq_3}) is discretized in space by TraceFEM and in time by the semi-implicit Euler scheme. The discrete problem reads: Given $c^n_h\in V_h$ that approximates $c(t_n)$  find $(c_h^{n+1}, \mu^{n+1}_h) \in V_h \times V_h$ such that
\begin{align} \label{dis_ch}
	\Bigl(\frac{c_h^{n+1}-c_h^{n}}{\tau}, v_h\Bigr)_{\Gamma_h} &= -Ma_h(\mu^{n+1}_h, v_h), \nonumber\\
	(\mu^{n+1}_h, q_h)_{\Gamma_h} &= \beta_s \tau \Bigl(\frac{c_h^{n+1}-c_h^{n}}{\tau}, q_h\Bigr)_{\Gamma_h} + ({f'_0}(c^{n}_h), q_h)_{\Gamma_h} + \epsilon^2a_h(c^{n+1}_h, q_h),
\end{align}
for all $(v_h, q_h) \in V_h \times V_h$, with initial condition $c_h^{0} = \pi_h c({t^0})$. The bilinear form $a_h(\cdot, \cdot)$ is defined in (\ref{bilinear}).
The term containing $\beta_s \tau (c_h^{n+1}-c_h^{n})$ is a stabilizing term that relaxes an otherwise severe time-step constraint from the explicit treatment of the nonlinearity while maintaining first-order temporal accuracy~\cite{shen2010numerical}.   

We now show the stability of the numerical scheme. 
For the purpose of analysis and following \cite{shen2010numerical}, for a given fixed $K>1$, we work with a regularized version of the potential 
\begin{align}
	f_0(c) = \frac{1}{4}(c^2-1)^2, \label{eq_5}
\end{align}  defined as:
\begin{equation}\label{fStab}
{f_0}(c) = 
\begin{cases}
	\frac{1}{4}(c^2 - 1)^2, & c \in [-K, K] \\
	\frac{1}{2}(3K^2 - 1)c^2 - 2K^3 c + \frac{1}{4}(3K^4 + 1), & c > K \\
	\frac{1}{2}(3K^2 - 1)c^2 + 2K^3 c + \frac{1}{4}(3K^4 + 1), & c < -K
\end{cases}
\end{equation}
Its derivative ${f'}_0(c)$ satisfies the growth conditions with $L = 3K^2 - 1$:
\begin{align}\label{eq_1124_1}
|f'_0(c)| \leq L|c|, \quad
f'_0(c)c \geq -c, 
\end{align}
and Lipschitz condition:
\begin{align}\label{eq_1124}
-1 \leq \frac{f'_0(x) - f'_0(y)}{x - y} \leq L, \quad \forall x, y \in \mathbb{R}, \; x \neq y.
\end{align}
This regularized potential is $C^2$ with globally bounded second derivatives globally: There exists a constant $L > 0$ such that 
\begin{align}\label{bound_f}
	\max_{x\in \mathbb{R}}| f_0''(x) |\le L.
\end{align}

\begin{theorem}
	Let 
	\begin{align*}  
		E^{n+1}_h=  \frac{\epsilon^2}{2} a_h( c^{n+1}_h,  c^{n+1}_h)   +\int_{\Gamma_h} f_0(c^{n+1}_h)\,dx,
	\end{align*}
	be the modified discrete energy. Under the condition 
	\begin{align}\label{eq_21}
		\beta_s \ge \frac{L}{2},  
	\end{align}
	the system (\ref{dis_ch}) obeys the following energy dissipation law
	\begin{align} \label{eq17}
		E^{n+1}_h-E^{n}_h +\frac{\epsilon^2}{2} a_h(\delta c^{n+1}_h,\delta c^{n+1}_h)  + M\tau a_h(\mu_h^{n+1},\mu_h^{n+1})\le 0.
	\end{align}
	In particular, this implies that the scheme (\ref{dis_ch}) is energy stable in the sense that $E^{n+1}_h \le E^{n}_h$  (the discrete analogue of (\ref{eq_7})) for all $n = 0, 1, 2,\dots $.
\end{theorem}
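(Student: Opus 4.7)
The plan is to follow the classical testing-and-telescoping argument for stabilized Cahn--Hilliard schemes, adapted to the discrete bilinear form $a_h(\cdot,\cdot)$. First I would test the concentration equation in (\ref{dis_ch}) with $v_h=\tau\mu_h^{n+1}$ to obtain
\begin{equation*}
(\delta c_h^{n+1},\mu_h^{n+1})_{\Gamma_h} = -M\tau\,a_h(\mu_h^{n+1},\mu_h^{n+1}),
\end{equation*}
and then test the chemical-potential equation with $q_h=\delta c_h^{n+1}$ to get
\begin{equation*}
(\mu_h^{n+1},\delta c_h^{n+1})_{\Gamma_h} = \beta_s\|\delta c_h^{n+1}\|_{\Gamma_h}^2 + (f_0'(c_h^{n}),\delta c_h^{n+1})_{\Gamma_h} + \epsilon^2 a_h(c_h^{n+1},\delta c_h^{n+1}).
\end{equation*}
Equating these two expressions eliminates $\mu_h^{n+1}$ and couples the dissipation term $M\tau\,a_h(\mu_h^{n+1},\mu_h^{n+1})$ to an expression involving the energy increment.

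Next, I would rewrite each of the two non-dissipation contributions as an exact energy difference plus a controllable remainder. For the gradient/stabilization part, bilinearity and symmetry of $a_h(\cdot,\cdot)$ give the polarization identity
\begin{equation*}
a_h(c_h^{n+1},\delta c_h^{n+1}) = \tfrac{1}{2}\bigl[a_h(c_h^{n+1},c_h^{n+1}) - a_h(c_h^{n},c_h^{n}) + a_h(\delta c_h^{n+1},\delta c_h^{n+1})\bigr],
\end{equation*}
which already reproduces the two target terms $\tfrac{\epsilon^2}{2}(a_h(c_h^{n+1},c_h^{n+1})-a_h(c_h^{n},c_h^{n}))$ and $\tfrac{\epsilon^2}{2}a_h(\delta c_h^{n+1},\delta c_h^{n+1})$ in~(\ref{eq17}). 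For the potential part, a second-order Taylor expansion with integral remainder yields
\begin{equation*}
f_0(c_h^{n+1}) - f_0(c_h^{n}) = f_0'(c_h^{n})\,\delta c_h^{n+1} + \tfrac{1}{2}f_0''(\xi)\,(\delta c_h^{n+1})^2
\end{equation*}
pointwise on $\Gamma_h$, so that after integration
\begin{equation*}
(f_0'(c_h^{n}),\delta c_h^{n+1})_{\Gamma_h} \ge \int_{\Gamma_h}\!\bigl[f_0(c_h^{n+1})-f_0(c_h^{n})\bigr]\,ds - \tfrac{L}{2}\|\delta c_h^{n+1}\|_{\Gamma_h}^2,
\end{equation*}
where (\ref{bound_f}) is used to bound $f_0''$ uniformly by $L$. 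This is precisely the step where the $C^2$ regularization of $f_0$ pays off.

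Assembling everything, the identity obtained from the two testings becomes
\begin{equation*}
-M\tau\,a_h(\mu_h^{n+1},\mu_h^{n+1}) \ge \bigl(\beta_s-\tfrac{L}{2}\bigr)\|\delta c_h^{n+1}\|_{\Gamma_h}^2 + \bigl(E_h^{n+1}-E_h^{n}\bigr) + \tfrac{\epsilon^2}{2}a_h(\delta c_h^{n+1},\delta c_h^{n+1}).
\end{equation*}
The hypothesis $\beta_s\ge L/2$ makes the first term on the right nonnegative, and rearranging gives exactly the stated inequality~(\ref{eq17}). The second assertion $E_h^{n+1}\le E_h^{n}$ follows at once because $a_h(\cdot,\cdot)$ is positive semidefinite, so the two remaining terms on the left of~(\ref{eq17}) are nonnegative.

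The only delicate point is the potential term: the scheme treats $f_0'$ explicitly, so one must absorb the Taylor remainder $\tfrac{1}{2}f_0''(\xi)(\delta c_h^{n+1})^2$ by the stabilization $\beta_s\|\delta c_h^{n+1}\|_{\Gamma_h}^2$. Everything else is linear algebra with the symmetric form $a_h$, and no surface-to-$\Gamma_h$ lifting is needed because the whole argument stays on $\Gamma_h$ and uses only the structural properties of $a_h$ and of the regularized $f_0$.
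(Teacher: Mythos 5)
Your proposal is correct and follows essentially the same route as the paper: testing with $\mu_h^{n+1}$ and $\delta c_h^{n+1}$, the polarization identity for $a_h$, a second-order Taylor expansion of the regularized $f_0$ with the uniform bound $|f_0''|\le L$, and absorbing the remainder via $\beta_s\ge L/2$. No gaps.
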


\begin{proof}
	We test the system (\ref{dis_ch}) with $v_h=\mu^{n+1}_h$ and $q_h= \delta c_h^{n+1} $ to obtain
	\begin{align}\label{eq_10}
		& \underbrace{\beta_s \norm{\delta c_h^{n+1}}^2_{\Gamma_h}}_{T_1} +\underbrace{({f'_0}(c^{n}_h), \delta c_h^{n+1})_{\Gamma_h}}_{T_2} +\underbrace{\epsilon^2 a_h(c^{n+1}_h, \delta c_h^{n+1})}_{T_3}    + M\tau a_h(\mu_h^{n+1},\mu_h^{n+1}) =0. 
	\end{align}
	Let us handle the terms on the LHS of (\ref{eq_10}) one by one. 
	Thanks to the Taylor's expansion
	\begin{align*}
		f_0(c^{n+1}_h)-f_0(c^n_h)=f_0'(c^n_h) \delta  c^{n+1}_h+\frac{f''_0(\xi)}{2}(\delta c^{n+1}_h)^2,\quad \xi\in (c^{n}_h, c^{n+1}_h),
	\end{align*}
	 the $T_2$ term can be written as:
	\begin{align*} 
		T_2= (f_0(c^{n+1}_h)-f_0(c^n_h),1)_{\Gamma_h} - \frac{1}{2}( f''_0(\xi) \delta c^{n+1}_h,\delta c^{n+1}_h)_{\Gamma_h}.
	\end{align*}
	Using  a standard polarization inequality in the third   term   of (\ref{eq_10}), we have: 
	\begin{align}
		T_3 =&\frac{\epsilon^2}{2}\Big(a_h(c^{n+1}_h, c^{n+1}_h)-a_h(c^{n}_h, c^{n}_h)+a_h(\delta {c}^{n+1}_h, \delta {c}^{n+1}_h)\Big). 
	\end{align}
	Therefore, we get
	\begin{align}\label{eq_uuu}
		&  T_1+T_2+T_3\nonumber \\ & =\beta_s \norm{\delta c_h^{n+1}}^2_{\Gamma_h}+(f_0(c^{n+1}_h)-f_0(c^n_h),1)_{\Gamma_h} - \frac{1}{2}( f''_0(\xi_1) \delta c^{n+1}_h,\delta c^{n+1}_h)_{\Gamma_h} \nonumber \\&+\frac{\epsilon^2}{2}\Big(a_h(c^{n+1}_h, c^{n+1}_h)-a_h(c^{n}_h, c^{n}_h)+a_h(\delta {c}^{n+1}_h, \delta {c}^{n+1}_h)\Big).
	\end{align}
	Recalling the definition of $E^{n}_h$ in (\ref{eq_uuu}), the above expression can be rewritten as:
	\begin{align*}
		&T_1+T_2+T_3 \\ = & E^{n+1}_h-E^{n}_h + \beta_s \norm{\delta c^{n+1}_h}^2_{\Gamma_h}+\frac{\epsilon^2}{2}a_h(\delta c^{n+1}_h , \delta c^{n+1}_h )  - \frac{1}{2}( f''_0(\xi_1) \delta c^{n+1}_h,\delta c^{n+1}_h)_{\Gamma_h}.
	\end{align*}
	Therefore, (\ref{eq_10}) becomes
	\begin{multline}\label{eq_20}
		   E^{n+1}_h-E^{n}_h + \beta_s \norm{\delta c^{n+1}_h}^2_{\Gamma_h}  +\frac{\epsilon^2}{2}a_h(\delta c^{n+1}_h , \delta c^{n+1}_h ) +\tau M a_h(\mu_h^{n+1},\mu_h^{n+1}) \\=\frac{1}{2}( f''_0(\xi_1) \delta c^{n+1}_h,\delta c^{n+1}_h)_{\Gamma_h}.  
	\end{multline}
	Using  (\ref{bound_f}) in the right-hand side of (\ref{eq_20}) we have:
	\begin{align}\label{eq_23}
		\frac{1}{2}( f''_0(\xi_1) \delta c^{n+1}_h,\delta c^{n+1}_h)_{\Gamma_h} \le \frac{L}{2} \norm{\delta c^{n+1}_h}^2_{\Gamma_h}.
	\end{align}
	Thanks to the condition (\ref{eq_21}), eqs. \eqref{eq_20} and \eqref{eq_23} imply the dissipative law (\ref{eq17}).  \\ 
	
\end{proof}

\section{Error Analysis} \label{sec:error_analysis}

This section establishes a priori error estimates for the FE method \eqref{dis_ch}. 
The analysis accounts for errors arising from  the spatial discretization, geometry approximation,  and the temporal discretization. It will be demonstrated that under appropriate regularity assumptions on the exact solution, the proposed method achieves an optimal convergence order.

Let us  introduce some further  notation:
\begin{alignat*}{2}
	e^{n}_h &= \pi_h c(t^{n}) - c_h^{n}, \quad & \eta^{n} &= c^e(t^{n}) - \pi_h c(t^{n}),\quad n=0,1,\dots, \\
	\zeta^{n}_h &= \pi_h \mu(t^{n}) - \mu_h^{n}, \quad & \kappa^{n} &= \mu^e(t^{n}) - \pi_h \mu(t^{n})),\quad n=1,2,\dots.
\end{alignat*}
so that the FE error in the concentration and chemical potential is split into the FE and interpolation errors. Approximation properties of the stanilized Ritz projection are sufficient for taking care of estimating various norms of $\eta$-s and $\kappa$-s. We need to focus on bounding norms of $e^{n}_h$ and $\zeta^{n}_h$.  

Using the  weak formulation (\ref{eq_8}) and the discrete scheme (\ref{dis_ch}), we get the error equations:
\begin{align} \label{dis_error}
	\frac{1}{\tau} (\delta e_h^{n+1}, v_h)_{\Gamma_h} + M a_h(\zeta^{n+1}_h, v_h) &= L^{n+1}_h(v_h), \nonumber \\
	-(\zeta^{n+1}_h, q_h)_{\Gamma_h}+ \beta_s (\delta e_h^{n+1}, q_h)_{\Gamma_h} + \epsilon^2 a_h(e^{n+1}_h, q_h) &\\
    +({f'_0}(\pi_h c^{n})-{f'_0}(c_h^{n}), q_h)_{\Gamma_h}&= \tilde{L}^{n+1}_h(q_h),\quad \forall\, v_h,q_h\in V_h.
\end{align}
By the definition of $c_h^0$, we have $e_h^0 = 0$. 
The residual terms are
\begin{equation}\label{l_eq}
	L_h^{n+1}(v_h) := \sum^{3}_{j=1} \Psi_j^{n+1}(v_h), \quad \tilde{L}_h^{n+1}(q_h) := \sum^{4}_{j=1} \Phi_j^{n+1}(q_h),
\end{equation}
where the individual components are given by:
\begin{align*}
	\Psi_1^{n+1}(v_h) &:= \left(\frac{(c^{n+1}-c^{n})^e}{\tau}, v_h\right)_{\Gamma_h} - \left( \frac{\partial c^{n+1}}{\partial t}, v_h^\ell \right)_{\Gamma}, \\
	\Psi_2^{n+1}(v_h) &:= \frac{1}{\tau}\left((\pi_h-I)\delta (c^{n+1})^e, v_h\right)_{\Gamma_h}, \\
	\Psi_3^{n+1}(v_h) &:= M a_h(\pi_h \mu^{n+1},v_h) - M (\nabla_{\Gamma} \mu^{n+1}, \nabla_{\Gamma} v_h^\ell)_{\Gamma_h}=0, \\
	\Phi_1^{n+1}(q_h) &:= -(\pi_h \mu^{n+1}, q_h)_{\Gamma_h} + (\mu^{n+1}, q_h^\ell)_{\Gamma}, \\
	\Phi_2^{n+1}(q_h) &:= \beta_s (\pi_h (c^{n+1}-c^n), q_h)_{\Gamma_h}, \\
	\Phi_{3,a}^{n+1}(q_h) &:= (({f'_0}(c^{n}))^e, q_h)_{\Gamma_h} - ({f'_0}(c^{n+1}), q_h^\ell)_{\Gamma}, \\
    \Phi_{3,b}^{n+1}(q_h) &:= -(({f'_0}(c^{n}))^e-{f'_0}(\pi_h c^{n}), q_h)_{\Gamma_h}, \\
	\Phi_4^{n+1}(q_h) &:= \epsilon^2 a_h( \pi_h c^{n+1},q_h) -\epsilon^2 (\nabla_{\Gamma} c^{n+1}, \nabla_{\Gamma} q_h^\ell)_{\Gamma}=0. 
\end{align*}
Note that $\Psi_3$ and $\Phi_4$ terms above vanish due to the definition of $\pi_h$ projection (\ref{proj_opera}).

The main result of this section is the following convergence theorem, which provides optimal-order error estimates under appropriate regularity assumptions on the exact solution. 

\begin{theorem}\label{error_analysis_1}
	Let $T = N\tau > 0$ and assume $(c, \mu)$ is the solution of (\ref{eq_8}) with sufficient regularity: 
	\begin{align*}
			c &\in L^2(0,T;H^{m+1}(\Gamma)),~~ 
			c_t \in L^2(0,T;H^{m+1}(\Gamma)),~~c_{tt}\in L^{\infty}(0,T;L^2(\Gamma))  \\
			\mu &\in L^2(0,T;H^{m+1}(\Gamma)),
	\end{align*}
	and $\Gamma$ sufficiently smooth. 
    Let $(c_h^{n}, \mu_h^n)$, $n = 1, 2, \ldots, N-1$, be the finite element solution of (\ref{dis_ch}). Then the following error estimate holds:
	\begin{align}\label{eq_06_1}
		\norm{ c^e(t^{N}) - c_h^{N}}^2_{\Gamma_h} 
		&+ \frac{M\tau}{\epsilon^2} \sum_{n=1}^{N-1} \norm{ \mu^e(t^{n+1}) - \mu_h^{n+1}}^2_{\Gamma_h} \nonumber \\
		&\le C (1 + T\alpha_1 \exp(T\alpha_1)) (K_1 \tau^2 + K_2 h^{2\min\{m,q\}+1}),
	\end{align}
	where $\alpha_1$ is defined in (\ref{grow_1}), and the constants $K_1$ and $K_2$ are given by:
	\begin{align}\label{gron_coff}
		K_1 &= C\tau {\norm{c_{tt}}_{L^{\infty}(0,T;L^2(\Gamma))}^2} + \frac{M(\beta^2_s +\tau L^2)}{\epsilon^2} \norm{c_t}^2_{L^2(0,T; L^{2}(\Gamma))}, \\
		K_2 &= \left(C + \frac{M\tau^2\beta^2_s }{\epsilon^2}\right) \norm{c_t}^2_{L^2(0,T; H^{m+1}(\Gamma))} 
		+ C\tau {\norm{c_t}_{L^{2}(0,T;L^2(\Gamma))}^2} \nonumber \\
		&\quad + \frac{CML^2\tau}{\epsilon^2} \norm{c}^2_{L^2(0,T; L^{2}(\Gamma))} + C\frac{M\tau}{\epsilon^2} {\norm{\mu}_{L^2(0,T;H^{m+1}(\Gamma))}^2 }
		 \nonumber \\
		&\quad  + C\frac{M\tau}{\epsilon^2}\norm{\mu}^2_{L
			^2((0,T);L^{2}(\Gamma))}+\frac{C ML^2\tau }{\epsilon^2}   \|c\|^2_{(L^2({0,T});H^{m+1}(\Gamma))}.
	\end{align}
\end{theorem}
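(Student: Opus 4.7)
The plan is to adapt the stability argument of the previous section to the error system (\ref{dis_error}). I would test the first error equation with $v_h = \zeta^{n+1}_h$ and the second with $q_h = \delta e^{n+1}_h$, then add the two after scaling the first by $\tau$, so that the mixed terms $(\delta e^{n+1}_h,\zeta^{n+1}_h)_{\Gamma_h}$ cancel. Applying the polarization identity to $\epsilon^2 a_h(e^{n+1}_h,\delta e^{n+1}_h)$ yields the telescoping quantity $\tfrac{\epsilon^2}{2}\bigl(a_h(e^{n+1}_h,e^{n+1}_h)-a_h(e^n_h,e^n_h)\bigr)$ plus the dissipative pieces $\tau M a_h(\zeta^{n+1}_h,\zeta^{n+1}_h)$ and $\beta_s\|\delta e^{n+1}_h\|_{\Gamma_h}^2$. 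The nonlinear increment $(f_0'(\pi_h c^n)-f_0'(c_h^n),\delta e^{n+1}_h)_{\Gamma_h}$ is controlled by the Lipschitz property (\ref{eq_1124}), giving $L\|e^n_h\|\|\delta e^{n+1}_h\|$, and absorbed into $\beta_s\|\delta e^{n+1}_h\|^2$ via Young's inequality at the cost of a term in $\|e^n_h\|^2$ that later feeds the Gronwall.

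Next, I would bound each of the residuals in (\ref{l_eq}). The two Ritz-consistency residuals $\Psi_3$ and $\Phi_4$ vanish identically by the definition (\ref{proj_opera}) of $\pi_h$. The time-truncation residuals $\Psi_1$ and $\Phi_2$ are handled by Taylor expansion in $t$, producing the $\tau^2$-order contributions to $K_1$ involving $\|c_{tt}\|_{L^\infty L^2}$ and $\|c_t\|_{L^2 L^2}$. The projection residual $\Psi_2$ and the nonlinear piece $\Phi_{3,b}$ are estimated using (\ref{proj_est}) together with the Lipschitz bound (\ref{bound_f}) on $f_0'$, producing the $h^{2(m+1)}$-order pieces of $K_2$. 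The purely geometric residuals $\Phi_1$ and $\Phi_{3,a}$, which compare $L^2(\Gamma_h)$ pairings with $L^2(\Gamma)$ pairings, are estimated via (\ref{eq_geo_eqv}), (\ref{eq_geo_eqv_1}), and the composition identity (\ref{eq_comp_ext}), producing $h^{q+1}$-order contributions.

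Summing the resulting inequality from $n=0$ to $N-1$ with $e^0_h=0$ telescopes the polarization contribution and yields
\[
\tfrac{\epsilon^2}{2}\, a_h(e^N_h,e^N_h) + \tau M\sum_{n=0}^{N-1} a_h(\zeta^{n+1}_h,\zeta^{n+1}_h) \le \tau\alpha_1\sum_{n=0}^{N-1}\|e^n_h\|_{\Gamma_h}^2 + \tau K_1 + h^{2\min\{m,q\}+1} K_2,
\]
to which the discrete Gronwall inequality applies, producing the $(1+T\alpha_1\exp(T\alpha_1))$ prefactor. The left-hand side of (\ref{eq_06_1}) is then recovered from the triangle inequalities $\|c^e(t^N)-c^N_h\|\le \|\eta^N\|+\|e^N_h\|$ and $\|\mu^e(t^{n+1})-\mu_h^{n+1}\|\le \|\kappa^{n+1}\|+\|\zeta^{n+1}_h\|$: the $\eta$ and $\kappa$ pieces are handled by (\ref{proj_est}); the $L^2$-norms of $e^N_h$ and $\zeta^{n+1}_h$ are obtained from $a_h$-coercivity together with a tangential Poincar\'e inequality on $\Gamma_h$, whose mean term is controlled by the discrete mass conservation of (\ref{dis_ch}) (obtained by testing with $v_h=1$) combined with the zero-mean normalization built into (\ref{proj_opera}).

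The main obstacle, beyond the Gronwall bookkeeping, is a careful tracking of the $\epsilon^{-2}$ and $M$ weights in the Young inequalities so that the final constants match (\ref{gron_coff}): the coercive quantity for $\zeta$ sits at scale $\tau M$ while the one for $e$ sits at scale $\epsilon^2$, and every $\tau M/\epsilon^2$ factor in (\ref{gron_coff}) records how a particular residual has been absorbed. A secondary technical point is the passage from the $a_h$-coercivity bound to the $L^2$-norm control of $e^N_h$ appearing on the left of (\ref{eq_06_1}): the mean of $e^n_h$ is not exactly zero but only $O(h^{q+1})$ because of the geometric perturbation of the mass-conservation identity, and this discrepancy must be shown to be consistent with the asserted convergence rate.
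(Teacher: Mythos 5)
Your test-function pairing is not the one the paper uses, and the difference matters. You mimic the stability proof, testing the first error equation with $\zeta_h^{n+1}$ and the second with $\delta e_h^{n+1}$ so that the mixed $L^2$ terms cancel; this yields a telescoping $\tfrac{\epsilon^2}{2}a_h(e_h^{n+1},e_h^{n+1})$ and dissipation $\tau M\,a_h(\zeta_h^{n+1},\zeta_h^{n+1})$, i.e.\ control of $H^1$-type quantities. The paper instead takes $v_h=2\tau e_h^{n+1}$ and $q_h=-\tfrac{2M\tau}{\epsilon^2}\zeta_h^{n+1}$, which cancels the \emph{cross stiffness terms} $a_h(\zeta_h^{n+1},e_h^{n+1})$ and produces $\|e_h^{n+1}\|_{\Gamma_h}^2-\|e_h^{n}\|_{\Gamma_h}^2$ and $\tfrac{2M\tau}{\epsilon^2}\|\zeta_h^{n+1}\|_{\Gamma_h}^2$ directly — exactly the $L^2$ quantities appearing in \eqref{eq_06_1} — so no Poincar\'e inequality and no mean control are ever needed.

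The genuine gap in your route is the recovery of $\|\zeta_h^{n+1}\|_{\Gamma_h}$ from $a_h(\zeta_h^{n+1},\zeta_h^{n+1})$. You propose a tangential Poincar\'e inequality ``whose mean term is controlled by the discrete mass conservation of \eqref{dis_ch}.'' Mass conservation (testing the first equation with $v_h=1$) controls the mean of $e_h^{n}$ only; the chemical potential is \emph{not} a conserved quantity, and $\zeta_h^{n+1}$ has no a priori zero-mean property from \eqref{proj_opera} either. Its mean can only be extracted from the second error equation tested with $q_h=1$, which gives $(\zeta_h^{n+1},1)_{\Gamma_h}=\beta_s(\delta e_h^{n+1},1)_{\Gamma_h}+({f'_0}(\pi_h c^{n})-{f'_0}(c_h^{n}),1)_{\Gamma_h}-\tilde L_h^{n+1}(1)$; this reintroduces $\|\delta e_h^{n+1}\|_{\Gamma_h}$, $\|e_h^{n}\|_{\Gamma_h}$ and all the $\Phi_j$ residuals, and none of that bookkeeping appears in your proposal. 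The omission is not cosmetic: several of your residual pairings (e.g.\ $\tau\Psi_1(\zeta_h^{n+1})$ and $\tau\Psi_2(\zeta_h^{n+1})$) are bounded by $\|\zeta_h^{n+1}\|_{\Gamma_h}$ times data, and without the mean estimate they cannot be absorbed into $\tau M\,a_h(\zeta_h^{n+1},\zeta_h^{n+1})$ at all, so the Gronwall step does not close as written. A secondary discrepancy is that your dissipation sits at scale $\tau M\,a_h(\zeta_h^{n+1},\zeta_h^{n+1})$ rather than $\tfrac{M\tau}{\epsilon^2}\|\zeta_h^{n+1}\|_{\Gamma_h}^2$, so even after repairing the mean issue the $\epsilon$-dependence of the constants would not match \eqref{gron_coff} without further rescaling. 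The argument is repairable, but as it stands the central claim — that the left-hand side of \eqref{eq_06_1} follows from $a_h$-coercivity plus mass conservation — is incorrect for the $\mu$-component.
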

\begin{proof}
	Taking $v_h= 2 \tau e^{n+1}_h$ and $q_h= \frac{-2 M \tau}{\epsilon^2} \zeta^{n+1}_h$ in (\ref{dis_error}) and summing up equalities we obtained that
	\begin{align}\label{error_2_1}
		\norm{ e^{n+1}_h}_{\Gamma_h}^2-\norm{ e^{n}_h}_{\Gamma_h}^2+ \norm{\delta e^{n+1}_h}_{\Gamma_h}^2 &+ \frac{2 M\tau }{\epsilon^2} \norm{ \zeta^{n+1}_h}^2_{\Gamma_h}   
		=\sum^{7}_{j=1} R_j^{n+1}
	\end{align}
	where the residual terms are defined as:
	\begin{align*}
		R_1^{n+1} &:= 2\tau \Psi_1(e^{n+1}_h),~ 
		R_2^{n+1} := 2\tau\Psi_2(e^{n+1}_h), ~
		R_3^{n+1} := -\frac{2M\tau}{\epsilon^2}\Phi_1(\zeta^{n+1}_h), \\
		R_4^{n+1} &:= -\frac{2M\tau}{\epsilon^2}\Phi_2(\zeta^{n+1}_h),~ 
		R_5^{n+1} := -\frac{2M\tau}{\epsilon^2}\Phi_{3,a}(\zeta^{n+1}_h),~ 
        R_6^{n+1} := -\frac{2M\tau}{\epsilon^2}\Phi_{3,b}(\zeta^{n+1}_h), \\
		R_7^{n+1} &:= \frac{2M\beta_s\tau}{\epsilon^2} (\delta e_h^{n+1}, \zeta^{n+1}_h)_{\Gamma_h}.
	\end{align*}
	We now estimate the first residual term $R_1^{n+1}$:
	\begin{align*}
		R_1^{n+1} = \int_{\Gamma_h} \frac{(c^{n+1}-c^{n})^e}{\tau} v_h \,ds- \int_{\Gamma}  \frac{\partial c^{n+1}}{\partial t}  v^\ell_h \,ds.
	\end{align*}
	Applying Taylor's theorem, we have: 
	\begin{align}\label{taylor}
		c(t) &= c(t^{n+1}) +c_t(t^{n+1})(t-t^{n+1})+\int^t_{t^{n+1}}c_{tt}(s)(t-s)\,ds. 
	\end{align}
	Using expansion (\ref{taylor}), the term $R_1^{n+1}$ can be rewritten as:
	\begin{align}\label{eq_28_1}
		R_1^{n+1}  &=-\int^{t^{n+1}}_{t^n}\int_{\Gamma_h}c^e_{tt}(t)\frac{(t-t^{n})}{\tau}\, v_h \,ds \,dt+\int_{\Gamma_h}  (c_t^{n+1})^e\, v_h \,ds-\int_{\Gamma} c_t^{n+1}  v^\ell_h \,ds. 
	\end{align}
	We estimate each term in (\ref{eq_28_1}) separately. Using the Cauchy--Schwarz inequality and (\ref{eq_geo_eqv_1}) in the first term, we have:
	\begin{align*}
		\Bigl| -\int^{t^{n+1}}_{t^n} \int_{\Gamma_h}c^e_{tt}(t)\frac{(t-t^{n})}{\tau}\, v_h \,ds\,dt \Bigr| & \le \frac{1}{2} \tau \sup_{t\in[0,T]} \norm{c^e_{tt}(t)}_{\Gamma_h} \norm{v_h}_{\Gamma_h} \\
		&\le C \tau \norm{c_{tt}}_{L^\infty(0,T;L^2(\Gamma))} \norm{v_h}_{\Gamma_h},
	\end{align*}
	and using the geometric consistency for stationary surfaces \cite[Section 5]{reusken2015analysis}:

\begin{align}\label{geo_con}
\int_{\Gamma_h} (c_t^{n+1})^e\, v_h \, ds_h \;-\; \int_{\Gamma} c_t^{n+1}\, v_h^\ell \, ds
&= \int_{\Gamma_h} (c_t^{n+1})^e\,(\mu_h - 1)\, v_h \, ds_h \nonumber\\
&\le \|1-\mu_h\|_{L^{\infty}( \Gamma_h)}\,\|(c_t^{n+1})^e\|_{\Gamma_h}\,\|v_h\|_{\Gamma_h} \nonumber\\
&\le C\,h^{q+1}\,\|c_t^{n+1}\|_{\Gamma}\,\|v_h\|_{\Gamma_h}.
\end{align}
In the last estimate we used the geometry approximation and the norm equivalence (\ref{eq_geo_eqv_1}),
which follow from the closest-point map identity  (\ref{eq_geo_eqv}), see \cite{reusken2015analysis}.

Putting it together and choosing $v_h=\tau e^{n+1}_h$ we have:
	\begin{align*}
		R_1^{n+1} &\le C \tau(\tau^2\norm{c_{tt}}_{L^{\infty}(0,T;L^2(\Gamma))}^2+h^{2q+2}\|c_t^{n+1}\|^2_{\Gamma})  + \frac{\tau}{4}\norm{e^{n+1}_h}^2_{\Gamma_h}.
	\end{align*}

Consider the second term $R_2^{n+1}$.
Let $\delta c^{n+1}:=c(t^{n+1})-c(t^{n})=\int_{t^n}^{t^{n+1}} c_t(t)\,dt$ and
\[
R_2^{n+1}:=2\big((\pi_h-I)\,\delta c^{n+1},\, e_h^{n+1}\big)_{\Gamma_h}.
\]
Then by Cauchy–Schwarz in time and space, and the $L^2$-error estimate for the stabilized Ritz projector  (\ref{proj_est}),
we get
\begin{align*}
|R_2^{n+1}|
&\le 2\,\|(\pi_h-I)\delta c^{n+1}\|_{\Gamma_h}\,\|e_h^{n+1}\|_{\Gamma_h}\\
&\le 2\sqrt{\tau}\Big(\int_{t^n}^{t^{n+1}}\!\!\|(\pi_h-I)c_t(t)\|_{\Gamma_h}^2\,dt\Big)^{1/2}
          \,\|e_h^{n+1}\|_{\Gamma_h}\\
&\le C\int_{t^n}^{t^{n+1}}\!\!\|(\pi_h-I)c_t(t)\|_{\Gamma_h}^2\,dt
   + \frac{\tau}{4}\,\|e_h^{n+1}\|_{\Gamma_h}^2\\
&\le C(h^{m+1}+h^{q+1})\int_{t^n}^{t^{n+1}}\!\!\|c_t(t)\|_{H^{m+1}(\Gamma)}^2\,dt
   + \frac{\tau}{4}\,\|e_h^{n+1}\|_{\Gamma_h}^2 .
\end{align*}
Here $(\pi_h-I)c_t(t)=\pi_h(c_t(t))-c^e_t(t)$,
and the use of the extension ensures that all quantities are defined on~$\Gamma_h$.

  Next,   we proceed by decomposing $\Phi_1^{n+1}(q_h)$ as follows:
    \begin{align*}
        \Phi_1^{n+1}(q_h) &:= -(\pi_h \mu^{n+1}, q_h)_{\Gamma_h} + (\mu^{n+1}, q_h^\ell)_{\Gamma}\\
          &= ((\mu^{n+1})^e-\pi_h \mu^{n+1}, q_h)_{\Gamma_h}-((\mu^{n+1})^e, q_h)_{\Gamma_h} + (\mu^{n+1}, q_h^\ell)_{\Gamma} \\
          &= ((\mu^{n+1})^e-\pi_h \mu^{n+1}, q_h)_{\Gamma_h}-  \int_{\Gamma_h} (\mu^{n+1})^e(1 - \mu_h) q_h \, ds_h.
    \end{align*}
  For the projection error term, we employ the $L^2$-error estimate again:
\begin{align*}
((\mu^{n+1})^e-\pi_h \mu^{n+1}, q_h)_{\Gamma_h}&\le C\|(\mu^{n+1})^e-\pi_h \mu^{n+1}\|_{\Gamma_h}\|q_h\|_{\Gamma_h}\\ 
&\le C( h^{m+1}+ h^{q+1}) \|\mu^{n+1}\|_{H^{m+1}(\Gamma)}\|q_h\|_{\Gamma_h}.
\end{align*}
For the geometric consistency term, following similar arguments as in \eqref{geo_con}, we obtain:
\begin{align*}
    \int_{\Gamma_h} (\mu^{n+1})^e(1 - \mu_h) q_h \, ds_h &\le C h^{q+1} \norm{\mu^{n+1}}_{\Gamma} \norm{q_h}_{\Gamma_h}.
\end{align*}
Combining these estimates yields the desired bound:
\begin{align*}
   |\Phi_1^{n+1}(q_h) | \le C((h^{m+1}+ h^{q+1})\|\mu^{n+1}\|_{H^{m+1}(\Gamma)} +h^{q+1} \|\mu^{n+1}\|_{\Gamma}) \|q_h\|_{\Gamma_h}.
\end{align*}
Therefore, the remainder term $R_3^{n+1}$ satisfies:
\begin{align*}
    R_3^{n+1} \le \frac{ CM\tau }{\epsilon^2}(h^{2m+2}+ h^{2q+2})\|\mu^{n+1}\|^2_{H^{m+1}(\Gamma)} +\frac{ M\tau }{8\epsilon^2} \norm{\zeta_h^{n+1}}^2_{\Gamma_h}.
\end{align*}

Let us now turn to estimating $R_4^{n+1}$.
\[
R_4^{n+1}:=-\frac{2M\tau}{\epsilon^2}\,\beta_s\,(\pi_h(c^{n+1}-c^{n}),\zeta_h^{n+1})_{\Gamma_h}.
\]
Using $c^{n+1}-c^{n}=\int_{t^n}^{t^{n+1}} c_t(t)\,dt$, we write
\[
R_4^{n+1}
=-\frac{2M\tau}{\epsilon^2}\,\beta_s\!\int_{t^n}^{t^{n+1}}\!(\pi_h c_t(t),\zeta_h^{n+1})_{\Gamma_h}\,dt.
\]
By the Cauchy-Schwarz inequality in space and time, followed by Young's inequality,
\begin{align*}
|R_4^{n+1}|
&\le \frac{2M\tau}{\epsilon^2}\,\beta_s
     \int_{t^n}^{t^{n+1}}\!\!\|\pi_h c_t(t)\|_{\Gamma_h}\,\|\zeta_h^{n+1}\|_{\Gamma_h}\,dt\\
&\le \frac{2M\tau}{\epsilon^2}\,\beta_s\,
     \sqrt{\tau}\Big(\int_{t^n}^{t^{n+1}}\!\!\|\pi_h c_t(t)\|_{\Gamma_h}^{2}\,dt\Big)^{1/2}
     \,\|\zeta_h^{n+1}\|_{\Gamma_h}\\
     &\le \frac{M\tau}{8\epsilon^2}\,\|\zeta_h^{n+1}\|_{\Gamma_h}^{2}
\;+\; \frac{CM\,\tau^2\,\beta_s^{\,2}}{\epsilon^2}
      \int_{t^n}^{t^{n+1}}\!\!\|\pi_h c_t(t)\|_{\Gamma_h}^{2}\,dt.
\end{align*}
Finally, split $\pi_h c_t = (\pi_h-I)c_t + c_t^e$ and use $(a+b)^2\le 2a^2+2b^2$ to obtain:
\begin{align*}
|R_4^{n+1}|
&\le \frac{M\tau}{8\epsilon^2}\,\|\zeta_h^{n+1}\|_{\Gamma_h}^{2}
 + \frac{C M\,\tau^2\,\beta_s^{\,2}}{\epsilon^2}
   \int_{t^n}^{t^{n+1}}\!\!\Big(\|(\pi_h-I)c_t(t)\|_{\Gamma_h}^{2}
                               + \|c_t^e(t)\|_{\Gamma_h}^{2}\Big)\,dt.
\end{align*}
Using the projection estimates (\ref{proj_est}) and the norm equivalence (\ref{eq_geo_eqv_1}), we obtain:
\begin{align*}
|R_4^{n+1}|
&\le \frac{M\tau}{8\epsilon^2}\,\|\zeta_h^{n+1}\|_{\Gamma_h}^{2}
 + \frac{C\,M\,\tau^2\,\beta_s^{\,2}}{\epsilon^2}\,
   \Big( h^{2m+2}+h^{2q+2} \Big)
   \int_{t^n}^{t^{n+1}}\!\!\|c_t(t)\|_{H^{m+1}(\Gamma)}^{2}\,dt
\\&\qquad
 + \frac{C\,M\,\tau^2\,\beta_s^{\,2}}{\epsilon^2}
   \int_{t^n}^{t^{n+1}}\!\!\|c_t(t)\|_{\Gamma}^{2}\,dt .
\end{align*}
	 Using (\ref{eq_comp_ext}), (\ref{eq_1124_1}) and (\ref{eq_1124}) and the similar arguments as in $\Phi_1^{n+1}(q_h)$, we have:
    \begin{align}\label{eq_1013}
   \Phi_{3,a}^{n+1}(q_h)= &\left| \int_{\Gamma_h} (f'_0(c^{n}))^e  q_h \, ds_h 
- \int_{\Gamma} f'_0(c^{n+1}) q^\ell_{h} \, ds \right| \nonumber\\
=&\left| \int_{\Gamma_h} ((f'_0(c^{n}))^e - (f'_0(c^{n+1}))^e) q_h \, ds_h 
+ \int_{\Gamma_h} (f'_0(c^{n+1}))^e q_h \, ds_h 
- \int_{\Gamma} f'_0(c^{n+1}) q^\ell_{h} \, ds \right| \nonumber\\
\leq& L \int_{\Gamma_h} |(c^{n} - c^{n+1})^e q_h| \, ds_h 
+ \left| \int_{\Gamma_h} (f'_0(c^{n+1}))^e(1 - \mu_h) q_h \, ds_h \right| \nonumber\\
\leq& L \tau \|c_t\|_{L^\infty((0,T),L^2(\Gamma))} \|q_h\|_{\Gamma_h} 
+ L \|c^{n+1}\|_{\Gamma} h^{q+1} \|q_h\|_{\Gamma_h}.
\end{align}
Here $L$ is the Lipschitz constant of $f_0'$.

Choosing $q_h=-\frac{2M\tau}{\epsilon^2}\,\zeta_h^{n+1}$ in \eqref{eq_1013} and applying Young’s inequality yields:
	\begin{align}\label{eq_57}
		{ |R_5^{n+1}|}\leq& \frac{C ML^2}{\epsilon^2}  \tau^{3} \|c_t\|^2_{L^\infty((0,T),L^2(\Gamma))} +\frac{C ML^2\tau h^{2q+2}}{\epsilon^2}   \|c^{n+1}\|^2_{\Gamma}
 +\frac{ M\tau }{8\epsilon^2} \norm{\zeta_h^{n+1}}^2_{\Gamma_h}. 
	\end{align}

The next  term is estimated using the Cauchy--Schwarz inequality, Lipschitz continuity of ${f'_0}$,  and (\ref{proj_est}):
\begin{align*}
\big|\big((f_0'(c^{n}))^e-f_0'(\pi_h c^{\,n}),\,q_h\big)_{\Gamma_h}\big|
&\le \|(f_0'(c^{n}))^e-f_0'(\pi_h c^{\,n})\|_{\Gamma_h}\,\|q_h\|_{\Gamma_h}\\
&\le L\,\|(c^{n})^e-\pi_h c^{\,n}\|_{\Gamma_h}\,\|q_h\|_{\Gamma_h} \\
&\le C\,L\,(h^{m+1}+h^{q+1})\,\|c^{n}\|_{H^{m+1}(\Gamma)}\,\|q_h\|_{\Gamma_h}\; .
\end{align*}

Choosing $q_h=-\frac{2M\tau}{\epsilon^2}\,\zeta_h^{n+1}$ in \eqref{eq_1013} and applying Young’s inequality yields:
\begin{align}\label{eq_57_1}
		{ |R_6^{n+1}|}\leq& \frac{C ML^2\tau (h^{2m+2}+h^{2q+2})}{\epsilon^2}   \|c^n\|^2_{H^{m+1}(\Gamma)}
 +\frac{ M\tau }{8\epsilon^2} \norm{\zeta_h^{n+1}}^2_{\Gamma_h}. 
\end{align}
	The $R_{6}^{n+1}$ term is estimated by  using the Cauchy--Schwarz inequality and the Young's inequality:
	\begin{align*}
		R_{7}^{n+1}&\le \frac{16 M \beta^2_s \tau}{\epsilon^2} \norm{\delta e_h^{n+1}}^2_{\Gamma_h} + \frac{ M\tau}{8\epsilon^2}\norm{\zeta^{n+1}_h}^2_{\Gamma_h}\\
		&\le \frac{ 16 M \beta^2_s\tau}{\epsilon^2} \norm{ e_h^{n+1}}^2_{\Gamma_h} + \frac{16 M \beta^2_s \tau}{\epsilon^2} \norm{ e_h^{n}}^2_{\Gamma_h} + \frac{ M  \tau}{8\epsilon^2}\norm{\zeta^{n+1}_h}^2_{\Gamma_h}.
	\end{align*}

  	Summing over $n = 0, 1, 2, \ldots, N-1$ in equation (\ref{error_2_1}) and using the fact that $e^{0}_h = 0$, we obtain 
 	\begin{align*}
		\norm{ e^N_h}_{\Gamma_h}^2& + \sum_{n=1}^{N-1}\norm{\delta e^{n+1}_h}_{\Gamma_h}^2 + \frac{2 M\tau }{\epsilon^2} \sum_{n=1}^{N-1}\norm{ \zeta^{n+1}_h}^2_{\Gamma_h}   
		=\sum_{n=1}^{N-1}\sum^{7}_{j=1} R_j^{n+1} \\
		\le & C \tau(\tau^2\norm{c_{tt}}_{L^{\infty}(0,T;L^2(\Gamma))}^2+h^{2q+2}\norm{c_t}^2_{L^2(({0,T});L^2(\Gamma))}\\
        & + C (h^{2m+2} +h^{2q+2})\norm{c_t}^2_{L^2(({0,T});H^{m+1}(\Gamma))}   
        \\&+C\frac{ M\tau }{\epsilon^2} h^{2q+2} \norm{\mu}^2_{L^2((0,T);L^{2}(\Gamma))}+C\frac{ M\tau }{\epsilon^2}(h^{2m+2}+h^{2q+2}) {\norm{\mu}_{L^2(0,T;H^{m+1}(\Gamma))}^2 }   
        \\&+\frac{ M \beta^2_s }{\epsilon^2}\tau^2(h^{2m+2}+h^{2q+2})  \norm{c_t}^2_{L^2(({0,T});H^{m+1}(\Gamma))} +\frac{ M \beta^2_s }{\epsilon^2}\tau^2 \norm{c_t}^2_{L^2(({0,T});L^{2}(\Gamma))}   
        \\&+\frac{ CM L^2}{\epsilon^2}\tau^3   \norm{c_t}^2_{L^2(({0,T});L^2(\Gamma))} +CML^2\tau h^{2q+2} \norm{c}^2_{L^2(({0,T});L^2(\Gamma))} 
        \\&+\frac{C ML^2\tau (h^{2m+2}+h^{2q+2})}{\epsilon^2}   \|c\|^2_{L^2(({0,T});H^{m+1}(\Gamma))}+ \alpha_1 \tau \sum_{n=1}^{N-1}\norm{ e^{n+1}_h}_{\Gamma_h}^2\\
        &+ \alpha_2\frac{2 M\tau }{\epsilon^2} \sum_{n=1}^{N-1}\norm{ \zeta^{n+1}_h}_{\Gamma_h}, 
    \end{align*}
	where the constants are defined as:
	\begin{align}\label{grow_1}
		\alpha_1 &:= \frac{1}{2}+\frac{32M\beta^2_s}{\epsilon^2},\quad
		\alpha_2 :=\frac{5}{16}.
	\end{align}
		 The right-hand side term containing $\zeta$ is absorbed into the left-hand side term. Therefore, the inequality simplifies to:
		\begin{align}\label{error_2_2}
			&\norm{ e^N_h}_{\Gamma_h}^2+ \sum_{n=1}^{N-1}\norm{\delta e^{n+1}_h}_{\Gamma_h}^2 + \frac{M\tau }{\epsilon^2} \sum_{n=1}^{N-1}\norm{ \zeta^{n+1}_h}^2_{\Gamma_h}   \nonumber \\
			&\le C (K_1 \tau^2+K_2 (h^{2q+2}+h^{2m+2}))+ \alpha_1 \tau \sum_{n=1}^{N-1}\norm{ e^{n+1}_h}_{\Gamma_h}^2.
		\end{align}
	
	An application of the discrete Gronwall's lemma to \eqref{error_2_2} with $T = N\tau$ yields the bound:
	\begin{equation}\label{eq:gronwall_result}
		\norm{e^N_h}_{\Gamma_h}^2 + \frac{ M\tau }{\epsilon^2} \sum_{n=1}^{N-1}\norm{ \zeta^{n+1}_h}^2_{\Gamma_h} \le c (1 + T\alpha_1 \exp(T\alpha_1)) \left(K_1 \tau^2 + K_2 (h^{2q+2} + h^{2m+2})\right),
	\end{equation}
	where the constants $K_1$ and $K_2$ are as defined in (\ref{gron_coff}).
	Next, we decompose the error into projection and discrete components:
	\[
	c(t^n) - c_h^n = \underbrace{c(t^n) - \pi_h c(t^n)}_{\eta^n} + \underbrace{\pi_h c(t^n) - c_h^n}_{e_h^n}.
	\]
	The triangle inequality  yields 
	\begin{equation}\label{eq:error_decomposition}
		\norm{c^e(t^n) - c_h^n}_{\Gamma_h} \le  \norm{\eta^n}_{\Gamma_h} + \norm{e_h^n}_{\Gamma_h}.
	\end{equation}
	
	For the projection error, we use the  approximation estimate on surfaces (\ref{proj_est}),
	\begin{align}\label{eq:projection_approx}
		\norm{\eta^n}_{\Gamma_h} = \norm{c^e(t^n) - \pi_h c(t^n)}_{\Gamma_h} \le C (h^{m+1}+h^{q+1}) \norm{c(t^n)}_{H^{m+1}(\Gamma)}.
	\end{align}
	
	Combining the estimate for $e_h^N$ from \eqref{eq:gronwall_result}  and \eqref{eq:projection_approx} gives:
	\begin{align*}
		\norm{c(t^N) &- c_h^N}_{\Gamma_h}^2 \le 2\left( \norm{\eta^N}_{\Gamma_h}^2 + \norm{e^N_h}_{\Gamma_h}^2 \right) \\
		&\le 2C\left( K_2 h^{2(m+1)} + C (1 + T\alpha_1 \exp(T\alpha_1)) \left(K_1 \tau^2 + K_2 (h^{2q+2} + h^{2m+2})\right) \right).
	\end{align*}
	Noting that $h^{2q+2} + h^{2m+2} \sim h^{2\min\{q, m\}+1}$, we consolidate terms to arrive at the final error estimate for the concentration:
	\begin{equation}\label{eq:final_error_concentration}
		\norm{c^e(t^N) - c_h^N}_{\Gamma_h}^2 \le C (1 + T\alpha_1 \exp(T\alpha_1)) \left( K_1 \tau^2 + K_2 h^{2\min\{m, q\}+1} \right).
	\end{equation}
	where the constants $K_1$ and $K_2$ are as defined in (\ref{gron_coff}).
	For the chemical potential,  the  error bound follows from \eqref{eq:gronwall_result} and the approximation result by the same argument.
	
\end{proof}

\section{Numerical results} \label{sec:numerics}
All experiments have been implemented using the open-source finite element software  Netgen/NGSolve \cite{JS14}.  The stabilization, mobility, and density parameters are chosen as  $\beta_s=1$, $M=1$, and $\rho=1$ in all the numerical experiments.

\subsection{Convergence Study}
To demonstrate the convergence behavior of the proposed method, we consider the following exact solution to the nonhomogeneous surface Cahn--Hilliard equations with the free energy per unit surface given by (\ref{eq_5}):
\begin{align}\label{eq95677}
	c^{*} = 0.5(1 - 0.8\exp(-0.4t))(x_1 x_2 + 1).
\end{align}
Here, $x = (x_1, x_2, x_3)^T$ denotes a point in $\mathbb{R}^3$. The exact chemical potential $\mu^*$ is computed directly from Equation (\ref{eq_3}), and we set $\epsilon = 0.1$.

The surface $\Gamma$ is characterized as the zero level set of the function 
\begin{align}\label{phi_zero}
	\phi(x) = \sqrt{x^2_1 + x^2_2 + x^2_3} - 1,
\end{align} 
embedded in the cubic domain $\Omega = [-5/4, 5/4]^3$. The initial triangulation $\mathcal{T}_h$ of $\Omega$ consists of 64 subcubes, each subdivided into six tetrahedra. The mesh is refined toward the surface, with refinement level $l$ and associated mesh size $h_l = 0.4/2^l$ for $l = 0, 1, 2, 3$. The computational mesh $\mathcal{T}^{\Gamma}_{h_l}$ comprises tetrahedra cut by the surface.

The discrete surface $\Gamma_h$ is defined as the zero level set of a piecewise polynomial interpolation $\phi_h$ of the exact level set function $\phi$, with geometry approximation order $q$ matching the finite element order ($q = 1$ for $\mathcal{P}_1$ and $q = 2$ for $\mathcal{P}_2$ elements). For $\mathcal{P}_2$ elements, an isoparametric mapping is employed to ensure higher-order geometry representation. All integrals over $\Gamma_h$ and in the narrow band $\Omega_h^\Gamma$ are computed using Gaussian quadrature rules of sufficient order to ensure that numerical integration errors do not dominate the discretization errors.
	
	The time step is chosen as $\tau = c_{\tau} \cdot h^{k+1}$ with $c_{\tau} = 0.5$, where $k$ is the finite element order. This scaling ensures that temporal discretization errors are of order $\mathcal{O}(h^{k+1})$ or smaller, which does not degrade the optimal spatial convergence rates of the method.

We report the following error norms in Figure \ref{num_sol_error}:
\begin{align}\label{norm_2}
	\|c^* - c_h\|_{2} = \left(\int_{\Gamma_h} |c^*(\mathbf{x},T) - c_h(\mathbf{x},T)|^2  ds\right)^{1/2},
\end{align}
\begin{align}\label{norm_1}
	\norm{c^*-c_h}_{2,2} = \left(\frac{1}{T} \sum_{k} \tau \norm{c^*(t_k)-c_h(t_k)}_{L^2(\Gamma_h)}^2\right)^{\frac{1}{2}}.
\end{align}

Figure \ref{num_sol_error} shows convergence plots for the scheme (\ref{dis_ch}) with $T = 0.1$, measuring errors in the norms defined in (\ref{norm_2}) and (\ref{norm_1}). The x-axis represents spatial step sizes, while the y-axis shows $L^2$-errors for both $c$ and $\mu$. Solid lines indicate $\mathcal{P}_1$ approximations, and dash-dotted lines represent $\mathcal{P}_2$ approximations. Dashed and dotted lines indicate the convergence slopes.


We observe $\mathcal{O}(h^{2})$ convergence in both norms for $c$ and $\mu$ with $\mathcal{P}_1$ elements, and $\mathcal{O}(h^{3})$ convergence with $\mathcal{P}_2$ elements. These computational results confirm optimal spatial convergence and are in agreement with theoretical predictions.

\begin{figure}[!ht]
	\centerline{
		\begin{tabular}{cc} 
			\hspace{0cm}
			\resizebox*{5.8cm}{!}{\includegraphics{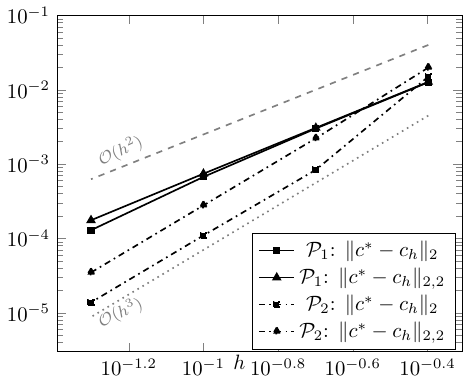}}%
			&\hspace{0cm}
			\resizebox*{5.8cm}{!}{\includegraphics{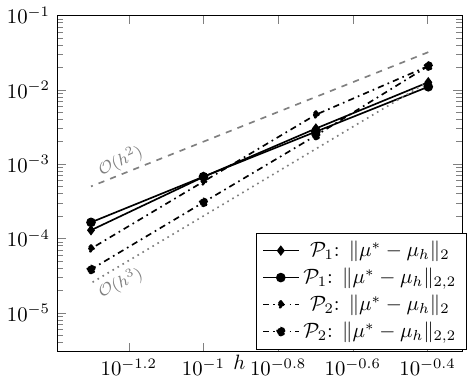}}%
			\\
			{(a)} \hspace{-1cm}&{(b) \  }
		\end{tabular}
	} 
	\caption{{{ Figures (a)--(b) present the convergence behavior the numerical solutions of CH equations  with the  the exact solution defined in  \eqref{eq95677}.}}}\label{num_sol_error}
\end{figure}

\subsection{Phase separation on a sphere}
The surface of a sphere provides a canonical setup for studying phase separation phenomena. Its geometric simplicity offers a tractable benchmark case, while its practical relevance is demonstrated by applications such as spherical lipid vesicles in drug delivery systems~\cite{yushutin2019computational}. We therefore proceed with the numerical investigation using this geometry.

The surface $\Gamma$ is defined as the zero level set of the scalar function $\phi$ in (\ref{phi_zero}).
The computational domain is embedded within $\Omega = [-5/4, 5/4]^3$, with spatial discretization parameter $h = 0.05$ ensuring sufficient resolution of the interfacial width. The Cahn--Hilliard interface parameter is set to $\epsilon = 0.05$, and adaptive time stepping with parameters $\tau$ specified in Table \ref{stoke_table1_31} maintains numerical stability while capturing the relevant dynamics. The system is initialized with a uniformly random concentration field $c_0(\mathbf{x}) = \text{rand}(\mathbf{x})$, where $\text{rand}(\mathbf{x}) \in [-1,1]$ represents small perturbations from the well-mixed state.

Figure \ref{num_sol_1as_NEW} illustrates the  evolution of the phase field over $t \in (0,1000]$. During the initial stage, the phase separation pattern develops rapidly, with distinct structures emerging by $T = 0.1$. This fast evolution phase concludes around $T = 5$, after which the dynamics slow down significantly. Beyond $T = 50$, the interfacial energy dissipation progresses at an even slower rate, indicating a gradual approach toward thermodynamic equilibrium.

The final state at $T = 1000$ (Figure~\ref{num_sol_1as_NEW}(l)) exhibits the expected equilibrium for a sphere.
In the sharp-interface limit ($\epsilon \to 0$), the Cahn--Hilliard dynamics approach a geometric evolution law where the interface evolves to minimize its length (perimeter) while conserving the enclosed area of each phase~\cite{cahn1996cahn, o2016cahn}. In our example with a sphere and 50\% of the area in each phase, this corresponds to the equilibrium state where each phase occupies a semisphere.

\begin{table}[h!]
	\begin{center}
		\caption{{{ Time steps used for the different time
					intervals to obtain the results in Figures \ref{num_sol_1as_NEW}--\ref{num_sol_new} }} }
		\label{stoke_table1_31}
		\begin{tabular}{|l|c|c|c|c|c|r|} 
			\hline
			Interval & $ [0,0.5)$ & $[0.5, 5)$& $ [5, 500)$ &$ [500, 5000)$ & $ [5000, 10000]$\\ [.75ex]
			\hline  
			$\tau$    & 0.01  & 0.1  &    1.0    &          10   &        100  \\ [.75ex]
			\hline
		\end{tabular}
	\end{center}
\end{table}

\begin{figure}[ht!]
	\centerline{\includegraphics[width=3.0cm]{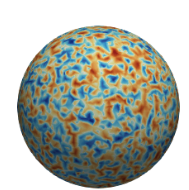}\hspace*{-0.1cm}\includegraphics[width=3.1cm]{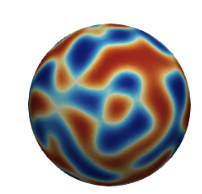}
		\hspace*{-0.2cm}\includegraphics[width=3.0cm]{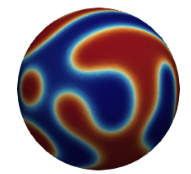}\hspace*{-0.1cm}\includegraphics[width=3.0cm]{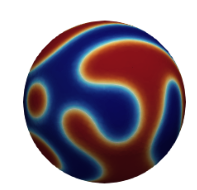}
	}
	\hspace{1.5cm} {(a) T=0 } \hspace{1.5cm}{(b) T=0.1 } \hspace{1.5cm}{(c) T=0.5 } \hspace{1.0cm}{(d) T=1 }
	
	\centerline{\includegraphics[width=3.1cm]{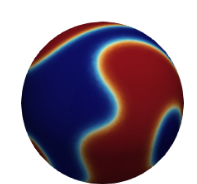}\hspace*{-0.1cm}\includegraphics[width=3.0cm]{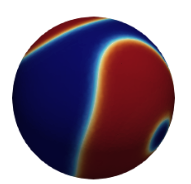}
		\hspace*{-0.2cm}\includegraphics[width=3.0cm]{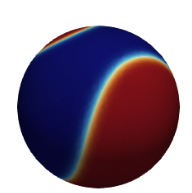}\hspace*{-0.1cm}\includegraphics[width=3.0cm]{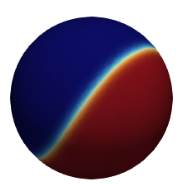}
	}
	\hspace{1.5cm} {(e) T=5 } \hspace{1.5cm}{(f) T=10 } \hspace{1.5cm}{(g) T=25 } \hspace{1.0cm}{(h) T=50 }
	
	\centerline{\includegraphics[width=3.0cm]{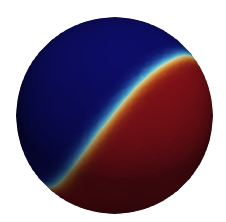}\hspace*{-0.1cm}\includegraphics[width=3.0cm]{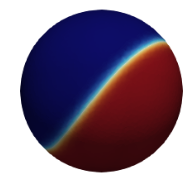}
		\hspace*{-0.1cm}\includegraphics[width=3.0cm]{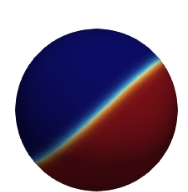}\hspace*{-0.1cm}\includegraphics[width=3.2cm]{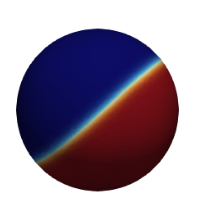}
	}
	\hspace{1.5cm} {(i) T=75 } \hspace{1.0cm}{(j) T=100 } \hspace{1.0cm}{(k) T=500 } \hspace{1.0cm}{(l) T=1000 }
	\caption{{Figures (a)--(l) present the phase separation with $\mathcal{P}_1$ approximation and $\epsilon=0.05$ and $\tau$ is given as in Table \ref{stoke_table1_31}.} }\label{num_sol_1as_NEW}
\end{figure}

\subsection{ Phase separation in a complex surface}   In this section, 
we consider the Cahn--Hilliard equations posed on a more complex manifold.
The surface $\Gamma$ is implicitly defined as the zero level-set of 
\begin{align*}
	\phi(x_1, x_2, x_3)= & (x^2_1 + x^2_2 - 4)^2 + (x^2_2 - 1)^2 +(x^2_2 + x^2_3 - 4)^2 + (x^2_1 - 1)^2 \\&+ (x^2_1 + x^2_3 - 4)^2 + (x^2_3 - 1)^2-13.
\end{align*}
The example of $\Gamma$ is taken from \cite{chernyshenko2015adaptive}. 
We embed $\Gamma$ in the cube  $[-9/4, 9/4]^3$ centered at the origin. We set $h=0.05$ and time step $\tau$ is given as in Table \ref{stoke_table1_31}. We set  $\epsilon= 0.05$. 
The system is initialized with a uniformly random concentration field $c_0(\mathbf{x}) = \text{rand}(\mathbf{x})$, where $\text{rand}(\mathbf{x}) \in [-1,1]$ represents small perturbations from the well-mixed state.
Figure \ref{num_sol_new} shows the computed concentration  up to $T=10^4$.
On the six–hole surface we observe a slower relaxation, and the apparent steady state lacks obvious symmetries. Two observations may explain this: (i) on higher-genus manifolds, area-constrained perimeter minimizers need not be unique and may break ambient symmetries \cite{ Canete2006Torus,CaneteRitore2008, HowardsHutchingsMorgan1999}; (ii) the surface Cahn--Hilliard dynamics is a nonconvex $H^{-1}$ gradient flow whose sharp-interface limit is a surface-diffusion–type motion (normal velocity involves surface second derivatives of curvature), so trajectories can settle near local rather than global minima and display slow coarsening \cite{BatesXun1994,CahnElliottNovickCohen1996,LeeMunchSuli2016}.


\section*{Acknowledgment} The authors were supported in part by the U.S. National Science Foundation under award
DMS-2408978.

\begin{figure}[ht!]
	\centerline{\includegraphics[width=3.5cm]{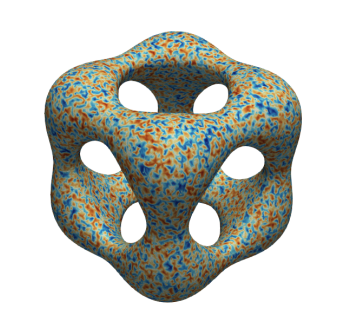}\hspace*{-0.1cm}\includegraphics[width=3.0cm]{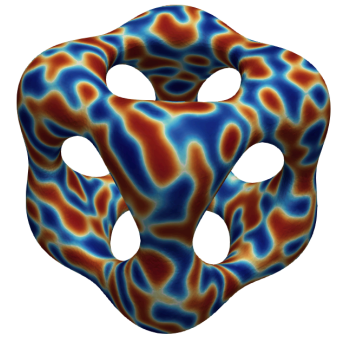}
		\hspace*{-0.2cm}\includegraphics[width=3.0cm]{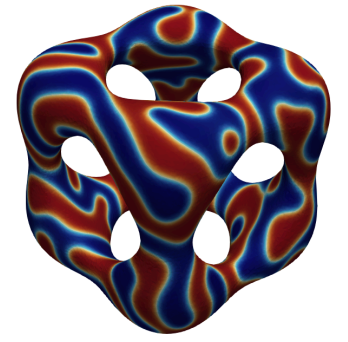}\hspace*{-0.1cm}\includegraphics[width=3.0cm]{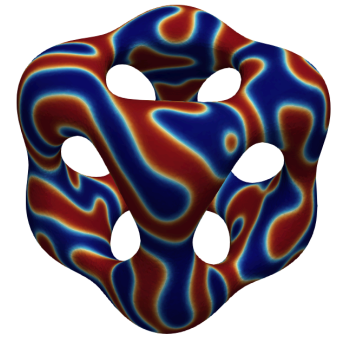}
	}
	\hspace{1.5cm} {(a) T=0 } \hspace{1.5cm}{(b) T=0.1 } \hspace{1.5cm}{(c) T=0.5 } \hspace{1.0cm}{(d) T=1 }
	
	\centerline{\includegraphics[width=3.0cm]{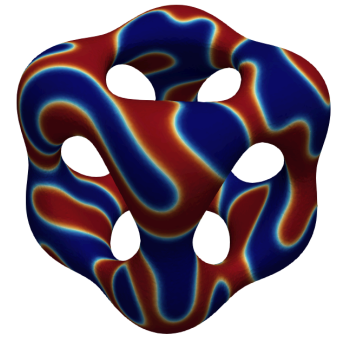}\hspace*{-0.1cm}\includegraphics[width=3.0cm]{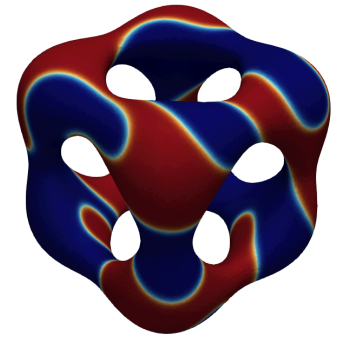}
		\hspace*{-0.2cm}\includegraphics[width=3.0cm]{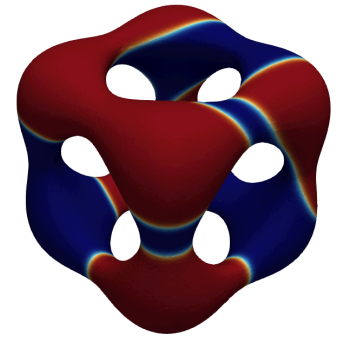}\hspace*{-0.1cm}\includegraphics[width=3.0cm]{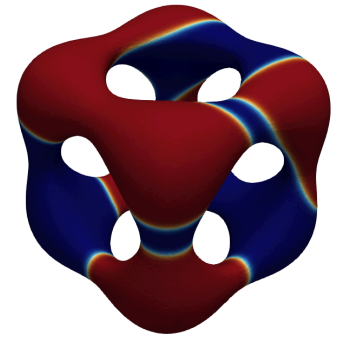}
	}
	\hspace{1.5cm} {(e) T=5 } \hspace{1.5cm}{(f) T=25 } \hspace{1.5cm}{(g) T=500 } \hspace{1.0cm}{(h) T=1000 }
	
	\centerline{\includegraphics[width=3.0cm]{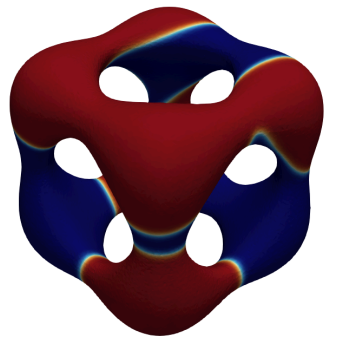}\hspace*{-0.1cm}\includegraphics[width=3.0cm]{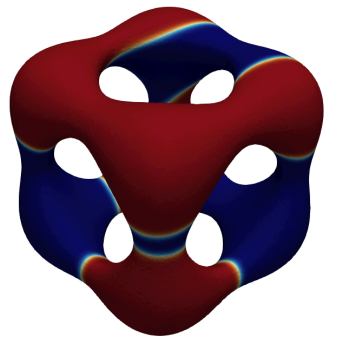}
		\hspace*{-0.1cm}\includegraphics[width=3.0cm]{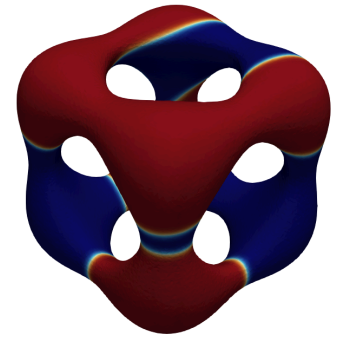}\hspace*{-0.0cm}\includegraphics[width=3.4cm]{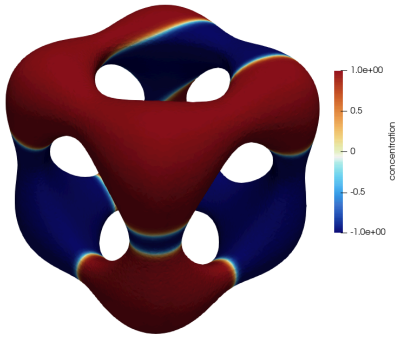}
	}
	\hspace{1.5cm} {(i) T=3000 } \hspace{1.0cm}{(j) T=5000 } \hspace{1.0cm}{(k) T=8000 } \hspace{1.0cm}{(l) T=10000 }
	\caption{{Figures (a)--(l) present the phase separation with $\mathcal{P}_1$ approximation at $\epsilon=0.05$ with $\tau$ is given as in Table \ref{stoke_table1_31}.} }\label{num_sol_new}
\end{figure}





\newpage
\bibliographystyle{siam}
\bibliography{references}{}
\end{document}